\DeclareMathAlphabet{\curly}{U}{rsfs}{m}{n}  
\newtheorem{theorem}{Theorem}[section]
\newtheorem{lemma}{Lemma}[section]
\theoremstyle{definition}
\newtheorem{remark}[theorem]{Remark}
\theoremstyle{problem}
\newtheorem{problem}[theorem]{Problem}
\numberwithin{equation}{section}
\renewcommand{\pmod}[1]{\allowbreak\mkern7mu({\operator@font mod}\,\,#1)}
\newcommand{\be}{\begin{equation}}
\newcommand{\ee}{\end{equation}}
\renewcommand{\le}{\leqslant}
\renewcommand{\ge}{\geqslant}
\begin{document}

\title[On sets with sum and difference structure]
{On sets with sum and difference structure}

\author{Jin-Hui Fang}
\address{Department of Mathematics, Nanjing University of Information Science $\&$ Technology, Nanjing 210044, PR China}
\email{fangjinhui1114@163.com}
\author{Csaba S\'{a}ndor*}
\address{Department of Stochastics, Institute of Mathematics, Budapest University of Technology and Economics, M\H{u}egyetem rkp. 3., H-1111, Budapest, Hungary; Department of Computer Science and Information Theory, Budapest University of Technology and Economics, M\H{u}egyetem rkp. 3., H-1111 Budapest, Hungary; MTA-BME Lend\"{u}let Arithmetic Combinatorics Research Group, ELKH, M\H{u}egyetem rkp. 3., H-1111 Budapest, Hungary}
\email{csandor@math.bme.hu}
\thanks{* Corresponding author.}
\thanks{The first author is supported by the National Natural Science Foundation of China, Grant No. 12171246 and the Natural Science Foundation of Jiangsu Province, Grant No. BK20211282. The second author is supported by the NKFIH Grants No. K129335.}
\keywords{additive complements, sumset, difference}
\subjclass[2010]{Primary 11B13, Secondary 11B34}
\date{\today}%

\begin{abstract}
For nonempty sets $A,B$ of nonnegative integers and an integer $n$, let $r_{A,B}(n)$ be the number of representations of $n$ as $a+b$ and $d_{A,B}(n)$ be the number of representations of $n$ as $a-b$, where $a\in A, b\in B$. In this paper, we determine the sets $A,B$ such that $r_{A,B}(n)=1$ for every nonnegative integer $n$. We also consider the \emph{difference} structure and prove that: there exist sets $A$ and $B$ of nonnegative integers such that
$r_{A,B}(n)\ge 1$ for all large $n$, $A(x)B(x)=(1+o(1))x$ and for any given nonnegative integer $c$, we have $d_{A,B}(n)=c$ for infinitely many positive integers $n$. Other related results are also contained.
\end{abstract}

\maketitle


\section{\bf Introduction}

Let $\mathbb{N}_0$ be the set of non-negative integers. For nonempty sets $A,B\subseteq \mathbb{N}_0$ and an integer $n$, let $r_{A,B}(n)$ be the number of representations of $n$ as $a+b$ and $d_{A,B}(n)$ be the number of representations of $n$ as $a-b$, where $a\in A, b\in B$. Two infinite sequences $A$ and $B$ are called \emph{infinite additive complements}, if
their sum $$A+B=\{a+b:a\in A, b\in B\}$$ contains all nonnegative integers, namely $r_{A,B}(n)\ge 1$ for all nonnegative integers. Let $A(x)$(resp. $B(x)$) be the number of elements in $A$(resp. $B$) not exceeding $x$.\vskip2mm

It follows from the definition that, for any infinite additive complements $A,B$, we have
$$\liminf_{x\rightarrow\infty}\frac{A(x)B(x)}{x}\ge 1.$$ In 1964, Danzer \cite{Danzer} disproved a conjecture posed by Hanani, that is:\vskip2mm

\noindent{\textbf{Theorem A (Danzer).}} There exist infinite additive complements $A$ and $B$ such that \begin{equation}\label{1.1b}\lim_{x\to +\infty}\frac{A(x)B(x)}{x}=1.\end{equation}

Recently, Kiss and S\'andor \cite{Sandor} extended the above result. In fact, they proved the following nice result:\vskip2mm

\noindent{\textbf{Theorem B (\cite[Theorem 2]{Sandor}).}} For each integer $h\ge 2$ there exist infinite sets of nonnegative integers $A_1,\cdots,A_h$ with the following properties:\vskip2mm

\noindent(1) $A_1+\cdots+A_h=\mathbb{N}_0$,\vskip1mm

\noindent(2) $A_1(x)\cdots A_h(x)=(1+o(1))x$ as $x\rightarrow\infty$.\vskip2mm

In this paper, we determine the sets $A,B$ such that $r_{A,B}(n)=1$ for every nonnegative integer $n$. We also consider the \emph{difference} structure.
\vskip2mm

\begin{theorem}\label{thm1} $r_{A,B}(n)=1$ for every nonnegative integer $n$, if and only if
\begin{eqnarray}\label{05101}
&&A=\{\epsilon_0+\epsilon_2m_1m_2+\cdots+\epsilon_{2k-2}m_1\cdots m_{2k-2}+\cdots, \epsilon_{2i}=0,1,\cdots,m_{2i+1}-1\},\nonumber\\
&&B=\{\epsilon_1m_1+\epsilon_3m_1m_2m_3+\cdots+\epsilon_{2k-1}m_1\cdots m_{2k-1}+\cdots, \epsilon_{2i-1}=0,1,\cdots,m_{2i}-1\},
\end{eqnarray}(or $A,B$ interchanged), where $m_1,m_2,\cdots$ are integers no less than two.
\end{theorem}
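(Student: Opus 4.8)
The plan is to recast the condition $r_{A,B}(n)=1$ for all $n\in\NN_0$ as the generating-function identity $f_A(x)f_B(x)=\frac{1}{1-x}$, where $f_A(x)=\sum_{a\in A}x^a$ and $f_B(x)=\sum_{b\in B}x^b$ are $0/1$ power series with constant term $1$ (note $0\in A\cap B$, since $r_{A,B}(0)=1$). The sufficiency (``if'') direction is then the statement that the generalized mixed-radix representation $n=\sum_{k\ge0}\epsilon_kM_k$, with $M_0=1$, $M_k=m_1\cdots m_k$ and digit bounds $0\le\epsilon_k\le m_{k+1}-1$, is unique; splitting the digits into even- and odd-indexed positions gives exactly the $A$ and $B$ of \eqref{05101}, and the product of the corresponding geometric factors telescopes to $\frac{1}{1-x}$. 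I would record this telescoping as the identity $\prod_{k\ge1}\frac{1-x^{M_k}}{1-x^{M_{k-1}}}=\frac{1}{1-x}$, each factor being the generating function of one digit.

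For the necessity (``only if'') direction I would argue by a peeling induction. Since $r_{A,B}(1)=1$, exactly one of $A,B$ contains $1$ (both would give two representations of $1$); relabel so that $1\in A$, and let $m_1\ge2$ be the least positive integer not in $A$, so $S:=\{0,1,\dots,m_1-1\}\subseteq A$ and $m_1\notin A$. A short look at small $n$ shows $m_1=\min(B\setminus\{0\})$. The engine of the proof is the reduction: $B\subseteq m_1\NN_0$ and $A=S\oplus m_1A_1$ for some $A_1\subseteq\NN_0$, so that with $B=m_1B_1$ one has $f_{A_1}(x)f_{B_1}(x)=\frac{1}{1-x}$ again, now with $1\in B_1$ (and $1\notin A_1$, since $m_1\notin A$). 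Granting $B\subseteq m_1\NN_0$, this last part is a clean computation: $f_B(x)=f_{B_1}(x^{m_1})$, and comparing residues modulo $m_1$ in $f_A(x)f_B(x)=\frac{1+x+\cdots+x^{m_1-1}}{1-x^{m_1}}$ forces the $m_1$ residue-sections of $f_A$ to be equal, which is precisely $A=S\oplus m_1A_1$ together with $f_{A_1}f_{B_1}=\frac{1}{1-x}$.

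The crux—and the step I expect to be the main obstacle—is proving $B\subseteq m_1\NN_0$. Here I would exploit uniqueness as follows. Since $S\subseteq A$, each $b\in B$ yields the block of representations $(s,b)$, $s\in S$, so the intervals $[b,b+m_1)$, $b\in B$, are pairwise disjoint (an overlap would produce a second representation of some integer). Starting from $0,m_1\in B$ and $A\cap[0,2m_1)=S$, I would run an induction ``on the scale'' that at stage $t$ analyses the integer $tm_1$ together with the block $[tm_1,(t+1)m_1)$: uniqueness forces exactly one of $tm_1\in A$, $tm_1\in B$, and when $tm_1\in B$ it forces the whole open block $(tm_1,(t+1)m_1)$ to avoid $A\cup B$. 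Thus $B$ collects consecutive multiples $0,m_1,\dots,(m_2-1)m_1$ until the first index $m_2\ge2$ with $m_2m_1\in A$, at which point $A\cap[0,m_1m_2)=S$ and $B\cap[0,m_1m_2)=m_1\{0,\dots,m_2-1\}$. The delicate point is that the block $[b,b+m_1)$ of a putative non-multiple $b\in B$ reaches forward past $b$, so a naive induction on the elements of $B$ does not close; organizing the argument by the scale $M_k=m_1\cdots m_k$ and re-running the same local analysis one level up, with the roles of $A$ and $B$ interchanged, is what makes it go through.

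Finally I would iterate the reduction. Each application peels off one modulus: $A=S\oplus m_1A_1$, $B=m_1B_1$ with $1\in B_1$, then the same lemma applied to $(B_1,A_1)$ produces $m_2$, and so on. Unwinding gives $A=\{0,\dots,m_1-1\}+m_1m_2\{0,\dots,m_3-1\}+\cdots$ and $B=m_1\{0,\dots,m_2-1\}+m_1m_2m_3\{0,\dots,m_4-1\}+\cdots$, i.e.\ precisely \eqref{05101} with digit ranges $\epsilon_{2i}\in\{0,\dots,m_{2i+1}-1\}$ and $\epsilon_{2i-1}\in\{0,\dots,m_{2i}-1\}$. The recursion either continues forever—giving infinite $A,B$ and an infinite sequence $(m_i)$—or terminates when some peeled set is $\{0\}$; the latter is the degenerate case in which one of $A,B$ is a full progression $m_1\cdots m_k\,\NN_0$, which I would either record as a finite sequence $(m_i)$ or treat separately. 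The only genuine bookkeeping that remains is checking that the even/odd indexing of the $\epsilon_k$ matches the alternation of the roles of $A$ and $B$ across successive peelings.
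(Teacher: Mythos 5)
Your overall architecture coincides with the paper's: sufficiency via uniqueness of the mixed-radix expansion, and necessity by iterating a peeling lemma that extracts $m_1$ with $A=\{0,\dots,m_1-1\}\oplus m_1A_1$, $B=m_1B_1$, the roles of the two sets alternating at each application -- this is exactly the paper's Lemma \ref{lemma1} and its Proposition $A=A_k+m_1\cdots m_{2k}C_k$, $B=B_k+m_1\cdots m_{2k}D_k$. The genuine gap sits precisely where you predicted the obstacle: your proof that $B\subseteq m_1\NN_0$ does not close. The phase analysis you run is correct as far as it goes (while $A\cap[0,tm_1)=\{0,\dots,m_1-1\}$ and $B\cap[0,tm_1)=m_1\{0,\dots,t-1\}$, uniqueness indeed forces the dichotomy at $tm_1$ and, when $tm_1\in B$, empties the open block $(tm_1,(t+1)m_1)$ of $A\cup B$), but it terminates at $t=m_2$, the first time $tm_1\in A$, having pinned down $A$ and $B$ only on $[0,m_1m_2)$. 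For elements of $B$ beyond $m_1m_2$ you offer only ``re-running the same local analysis one level up, with the roles of $A$ and $B$ interchanged,'' and as stated this is circular: you are entitled to pass to the pair $(B_1,A_1)$ only after the full reduction $B=m_1B_1$, $A=\{0,\dots,m_1-1\}\oplus m_1A_1$ has been established for \emph{all} of $A$ and $B$, which is the very claim at issue. Nor is the scale-$m_1m_2$ analysis a verbatim mirror of the scale-$m_1$ one: the $B$-superblocks are $m_1$-spaced progressions of the form $m_1\{0,\dots,m_2-1\}+jm_1m_2$, not intervals, so your disjoint-interval engine does not transfer unchanged; making it transfer amounts to first quotienting out the level-one structure, i.e.\ to the reduction lemma again.

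The paper closes exactly this gap with a single-scale least-counterexample argument in the proof of Lemma \ref{lemma1}: take the least positive integer $n$ (necessarily with $m\nmid n$) whose membership pattern is inconsistent with $C=\{0,\dots,m-1\}+mE$, $D=mF$, and in each of the three cases $n\in C\setminus D$, $n\in D\setminus C$, $n\notin C\cup D$ use the conforming structure below $n$ to manufacture a second representation of some integer (of $n$, of $n+m-1$, or of $\lfloor n/m\rfloor m$), contradicting $r_{C,D}\equiv 1$. No multi-scale bookkeeping is needed; the recursion across scales is then carried entirely by iterating the lemma, and your unwinding into \eqref{05101} (including the even/odd digit bookkeeping) is fine. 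To repair your write-up, replace the ``one level up'' gesture by an induction over \emph{all} integers $n$, not just elements of $B$, with hypothesis ``every integer below $n$ is consistent with: the $m_1$ residue-sections of $A$ are equal and $B\subseteq m_1\NN_0$,'' followed by the case analysis above. One point in your favor: your explicit treatment of the terminating case (a peeled set equal to $\{0\}$, e.g.\ $A=\NN_0$, $B=\{0\}$, which satisfies $r_{A,B}\equiv 1$ yet is not of the form \eqref{05101} with infinitely many $m_i$) addresses a degenerate case the paper tacitly excludes, since the step ``there exists $m\ge 2$ with $m\notin C$'' in Lemma \ref{lemma1} fails when $C=\NN_0$.
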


Based on Theorem \ref{thm1}, we naturally posed the following problem for further research:
\begin{problem}\label{05121}
Is it true that if $A$ and $B$ are infinite additive complements and not of the form \eqref{05101}, then $r_{A,B}(n)\ge 2$ for infinitely many positive integer $n$?
\end{problem}

\begin{theorem}\label{thm2} If $r_{A,B}(n)=1$ for every nonnegative integer $n$, then $d_{A,B}(n)=1$ for every integer $n$.
\end{theorem}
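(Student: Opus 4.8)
The plan is to invoke Theorem~\ref{thm1} to put $A,B$ in the explicit mixed-radix form \eqref{05101} (say $A$ carries the even-indexed digits; the interchanged case is handled identically), and then to count differences through finite truncations. This detour is forced on us because the natural two-sided generating function $f_A(x)f_B(1/x)=\sum_{a\in A,\,b\in B}x^{a-b}$ has no honest domain of convergence: $f_A(x)=\sum_{a\in A}x^a$ needs $|x|<1$ while $f_B(1/x)=\sum_{b\in B}x^{-b}$ needs $|x|>1$, so one cannot simply manipulate it into $\sum_{n\in\ZZ}x^n$. Writing $M_0=1$ and $M_k=m_1m_2\cdots m_k$, for each $k\ge 1$ I would set
\[
A_k=\Big\{\textstyle\sum_{i=0}^{k-1}\epsilon_{2i}M_{2i}\Big\},\qquad
B_k=\Big\{\textstyle\sum_{i=1}^{k}\epsilon_{2i-1}M_{2i-1}\Big\},
\]
with the $\epsilon$'s ranging as in \eqref{05101}. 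These are finite sets with $A_k\subseteq A_{k+1}$, $B_k\subseteq B_{k+1}$, $\bigcup_k A_k=A$, $\bigcup_k B_k=B$, and, since the positions $0,1,\dots,2k-1$ together give exactly the mixed-radix expansions of $[0,M_{2k})$, one has the \emph{unique} factorization $A_k\oplus B_k=\{0,1,\dots,M_{2k}-1\}$, i.e. $f_{A_k}(x)f_{B_k}(x)=(1-x^{M_{2k}})/(1-x)$.

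The key step is a clean finite identity for the Laurent polynomial $D_k(x):=f_{A_k}(x)f_{B_k}(1/x)=\sum_n d_{A_k,B_k}(n)x^n$. Here I would exploit that $B_k$ is a symmetric box: replacing each digit $\epsilon_{2i-1}$ by $m_{2i}-1-\epsilon_{2i-1}$ is an involution of $B_k$ sending $b\mapsto\max B_k-b$, so $B_k=\max B_k-B_k$ and hence $f_{B_k}(1/x)=x^{-\max B_k}f_{B_k}(x)$. Combining this symmetry with the sum factorization above gives
\[
D_k(x)=x^{-\max B_k}f_{A_k}(x)f_{B_k}(x)=x^{-\max B_k}\,\frac{1-x^{M_{2k}}}{1-x}=\sum_{j=0}^{M_{2k}-1}x^{\,j-\max B_k}.
\]
Reading off coefficients, $d_{A_k,B_k}(n)\in\{0,1\}$ for \emph{every} $n$, with value $1$ exactly on the interval $-\max B_k\le n\le M_{2k}-1-\max B_k=\max A_k$ (the right endpoint because $\max A_k+\max B_k=M_{2k}-1$). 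So the finite truncations already satisfy the desired unique-difference property on an interval that grows in both directions.

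Finally I would pass to the limit. For a fixed integer $n$, every pair $(a,b)\in A\times B$ with $a-b=n$ lies in $A_k\times B_k$ for all large $k$, so $d_{A_k,B_k}(n)$ is nondecreasing in $k$ and $d_{A_k,B_k}(n)\to d_{A,B}(n)$. Since each term is at most $1$, this forces $d_{A,B}(n)\le 1$; and because $\max B_k\to\infty$ and $\max A_k\to\infty$ as $k\to\infty$, the index $n$ eventually falls in the interval $[-\max B_k,\max A_k]$, whence $d_{A_k,B_k}(n)=1$ for all large $k$ and therefore $d_{A,B}(n)=1$. As $n\in\ZZ$ was arbitrary, this proves the claim. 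The only real obstacle is the one flagged at the outset—the divergence of the bilateral difference series—and the truncation device is precisely what converts it into genuine Laurent polynomials; the symmetry of the box $B_k$ is the small structural input that makes the finite identity collapse so cleanly.
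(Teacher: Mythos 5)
Your proof is correct, and while it shares the paper's overall skeleton --- invoke Theorem~\ref{thm1}, truncate to the finite sets $A_k,B_k$ of \eqref{05102}, show $d_{A_k,B_k}(n)=1$ on an interval growing in both directions, and pass to the limit by monotonicity --- the way you establish the finite statement is genuinely different. The paper proves by induction on $k$ that $A_k-B_k$ is exactly the interval $I_k=[-\max B_k,\,\max A_k]$ and then forces multiplicity one by the pigeonhole count $|A_k|\,|B_k|=m_1m_2\cdots m_{2k}=|I_k|$. You instead convert differences to sums via the digit-complement involution $b\mapsto \max B_k-b$ of the box $B_k$, giving $f_{B_k}(1/x)=x^{-\max B_k}f_{B_k}(x)$, and then quote the unique factorization $A_k\oplus B_k=\{0,1,\dots,m_1\cdots m_{2k}-1\}$ (the same mixed-radix uniqueness that underlies the sufficiency half of Theorem~\ref{thm1}) to obtain the closed form
\[
\sum_{n}d_{A_k,B_k}(n)\,x^{n}\;=\;x^{-\max B_k}\,\frac{1-x^{m_1\cdots m_{2k}}}{1-x},
\]
which reads off both the interval and the multiplicity in one stroke. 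Your identity is in fact slightly stronger than what the paper records, since it also shows $d_{A_k,B_k}$ vanishes off $I_k$; what the paper's route buys in exchange is elementarity --- a bare induction on endpoints plus a counting argument, with no generating functions or symmetry observation needed. Your limiting step (monotonicity of $d_{A_k,B_k}(n)$ in $k$, the uniform bound $d_{A_k,B_k}(n)\le 1$, and eventual membership of any fixed $n$ in $[-\max B_k,\max A_k]$ because $\max A_k,\max B_k\to\infty$) matches the paper's final passage, and your dismissal of the interchanged case is legitimate, since $d_{B,A}(n)=d_{A,B}(-n)$.
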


We could deduce from Theorem \ref{thm2} that, if $A$ and $B$ are infinite additive complements and $d_{A,B}(n)\le 1$ for every integer $n$, then $r_{A,B}(n)=1$ for every nonnegative integer $n$, otherwise $r_{A,B}(n)\ge 2$ for some $n$, that is $n=a+b=a'+b'$. Hence $a-b'=a'-b$, which is impossible. We posed another problem similar to Problem \ref{05121}:

\begin{problem}\label{2} Is it true that if $A$ and $B$ are infinite additive complements and not of the form (1.2), then $d_{A,B}(n)\ge 2$ for infinitely many integer $n$?\end{problem}

\begin{theorem}\label{thm3} If $r_{A,B}(n)=1$ for every nonnegative integer $n$, then \begin{eqnarray*}\liminf_{x\rightarrow\infty}\frac{A(x)B(x)}{x}=1\hskip3mm \mbox{and}\hskip3mm \frac{3}{2}\le \limsup_{x\rightarrow\infty}\frac{A(x)B(x)}{x}\le2.\end{eqnarray*}
Furthermore, the constants $\frac{3}{2}$ and $2$ are best possible.
\end{theorem}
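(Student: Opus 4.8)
Throughout I would invoke Theorem \ref{thm1}: after possibly interchanging $A$ and $B$, there are integers $m_1,m_2,\dots\ge 2$ so that every $n\in\mathbb{N}_0$ has a unique mixed--radix expansion $n=\sum_{j\ge0}\epsilon_j M_j$, with $M_0=1$, $M_j=m_1\cdots m_j$ and $\epsilon_j\in\{0,\dots,m_{j+1}-1\}$, where $A$ (resp. $B$) consists of those $n$ whose odd-indexed (resp. even-indexed) digits all vanish. Writing $a_k=\prod_{i=1}^k m_{2i-1}$ and $b_k=\prod_{i=1}^k m_{2i}$ for the number of elements of $A$, resp. $B$, in $[0,M_{2k})$, one gets the basic identity $a_kb_k=M_{2k}$. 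The expansion also yields a staircase: $B(x)=b_k$ is constant for $x\in[M_{2k},M_{2k+1})$, while $A(x)=a_{k+1}=m_{2k+1}a_k$ is constant for $x\in[M_{2k+1},M_{2k+2})$. Evaluating at $x=M_{2k}-1$ gives $A(x)=a_k$, $B(x)=b_k$, hence $A(x)B(x)=M_{2k}=x+1$; since the introduction already records $\liminf A(x)B(x)/x\ge1$ for additive complements, this proves $\liminf A(x)B(x)/x=1$.

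For the upper bound I would count pairs. Since $A(x)B(x)=\#\{(a,b)\in A\times B:\ a\le x,\ b\le x\}$, split according to whether $a+b\le x$. Pairs with $a+b\le x$ automatically satisfy $a,b\le x$ and are in bijection with $\{0,1,\dots,x\}$, because each such integer has a unique representation; there are $x+1$ of them. A pair with $a,b\le x$ but $a+b>x$ has $a+b\in\{x+1,\dots,2x\}$, and uniqueness of representations allows at most one pair for each of these $x$ values. Hence $A(x)B(x)\le(x+1)+x=2x+1$ and $\limsup A(x)B(x)/x\le2$.

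The lower bound $\limsup A(x)B(x)/x\ge\frac{3}{2}$ is the core. Put $c''_k=\max\bigl(A\cap[0,M_{2k})\bigr)=\sum_{i=0}^{k-1}(m_{2i+1}-1)M_{2i}$ and $c_k=\max\bigl(B\cap[0,M_{2k})\bigr)=\sum_{i=1}^k(m_{2i}-1)M_{2i-1}$, and test the two points $x_k=M_{2k}+c''_k\in[M_{2k},M_{2k+1})$ and $y_k=M_{2k+1}+c_k\in[M_{2k+1},M_{2k+2})$. Using the staircase, at $x_k$ the function $B$ is still $b_k$ while $A$ has picked up a full second copy, $A(x_k)=2a_k$; at $y_k$ one has $A(y_k)=a_{k+1}$ and $B(y_k)=2b_k$. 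With $a_kb_k=M_{2k}$ this gives
\[\frac{A(x_k)B(x_k)}{x_k}=\frac{2M_{2k}}{M_{2k}+c''_k},\qquad\frac{A(y_k)B(y_k)}{y_k}=\frac{2m_{2k+1}M_{2k}}{m_{2k+1}M_{2k}+c_k}.\]
The decisive point is the telescoping identity $c''_k+c_k=M_{2k}-1$, obtained by writing each summand as a difference $M_j-M_{j-1}$. Setting $t_k=c_k/M_{2k}\in(0,1)$, so that $c''_k/M_{2k}=1-t_k-1/M_{2k}$, the first ratio is $\ge\frac{3}{2}$ exactly when $t_k\ge\frac{2}{3}-\frac{1}{M_{2k}}$, and the second is $\ge\frac{3}{2}$ exactly when $m_{2k+1}\ge 3t_k$. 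If $t_k<\frac{2}{3}-\frac{1}{M_{2k}}$ then $3t_k<2\le m_{2k+1}$, so the second holds; otherwise the first holds. Thus for every $k$ at least one of $x_k,y_k$ satisfies $A(x)B(x)/x\ge\frac{3}{2}$, and since these points tend to infinity we conclude $\limsup A(x)B(x)/x\ge\frac{3}{2}$.

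Finally I would show both constants are attained. Choosing any sequence with $m_{2k+1}\to\infty$ makes the ratio at $y_k$ at least $2m_{2k+1}/(m_{2k+1}+1)\to2$, so $\limsup\ge2$, and with the upper bound $\limsup=2$ for such $A,B$. For the sharpness of $\frac{3}{2}$ I would take $m_i=2$ for all $i$, i.e. $A$ the integers whose base-$4$ digits lie in $\{0,1\}$ and $B=2A$. The lower bound already gives $\limsup\ge\frac{3}{2}$, and the remaining, most delicate, step is the matching upper bound $\limsup\le\frac{3}{2}$, which I expect to be the main obstacle. Here the self-similarity of $A$ is exploited: normalising $x=\theta\,4^k$ and using the staircase, the ratio converges to $(1+\Phi(\theta-1))/\theta$ on $\theta\in[1,2)$ and to $2\bigl(1+\Phi((\theta-2)/2)\bigr)/\theta$ on $\theta\in[2,4)$, where $\Phi(s)=\lim_k A(s\,4^k)/2^k$ is the Cantor-type distribution function of $A$ (and $B(y)=A(y/2)$ converts the second interval to the first). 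The crux is then the pointwise estimate $\Phi(s)\le\frac{3}{2}s+\frac{1}{2}$ for $s\in[0,1]$, which I would derive from the self-similar functional equation satisfied by $\Phi$; granting it, both expressions are $\le\frac{3}{2}$, with equality at $\theta=\frac{4}{3}$ and $\theta=\frac{8}{3}$ (the scaled limits of $x_k$ and $y_k$), giving $\limsup A(x)B(x)/x=\frac{3}{2}$.
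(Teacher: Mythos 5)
Your proposal is correct and essentially complete for everything except the optimality of $\frac{3}{2}$, and on those parts it takes a genuinely different route from the paper. The paper funnels the entire $\limsup$ analysis through Lemma \ref{lemma2}, imported from \cite{Ma}: $\limsup_{x\to\infty}A(x)B(x)/x=\limsup_k 2/(1+D_k)$, after which $D_k\ge 0$ gives the upper bound $2$, the dichotomy ``$m_k\ge 3$ infinitely often versus $m_k=2$ eventually'' gives $\liminf_k D_k\le\frac13$ and hence the lower bound $\frac32$, and both sharpness claims are read off by taking $m_k=2$, resp.\ $m_k=k$. You replace this imported lemma by two self-contained arguments: the double count of pairs $(a,b)$ with $a,b\le x$, split according to $a+b\le x$, gives $A(x)B(x)\le 2x+1$ for \emph{any} $A,B$ with $r_{A,B}(n)=1$ for all $n$ --- notably without invoking Theorem \ref{thm1} at all, so this half is cleaner and slightly more general than the paper's; and the test points $x_k=M_{2k}+c_k''$, $y_k=M_{2k+1}+c_k$ together with the telescoping identity $c_k''+c_k=M_{2k}-1$ give $\limsup\ge\frac32$. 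I verified the staircase values $A(x_k)=2a_k$, $B(x_k)=b_k$, $A(y_k)=a_{k+1}$, $B(y_k)=2b_k$ and your dichotomy in $t_k=c_k/M_{2k}$ (if $t_k<\frac23-\frac{1}{M_{2k}}$ then $3t_k<2\le m_{2k+1}$ and $y_k$ works; otherwise $x_k$ works): all correct. Your $\liminf$ computation at $x=M_{2k}-1$ coincides with the paper's, and your choice $m_{2k+1}\to\infty$ for the sharpness of $2$ works just as well as the paper's $m_k=k$.

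The one genuine gap is the sharpness of $\frac32$, which is part of the statement. For $m_i\equiv 2$ you reduce $\limsup\le\frac32$ to the estimate $\Phi(s)\le\frac32 s+\frac12$ but explicitly leave it unproven (``granting it''), and you also do not establish the existence and continuity of the limit $\Phi(s)=\lim_k A(s\,4^k)/2^k$ or the uniformity in $\theta$ needed to convert pointwise limits on $\theta\in[1,2)$ and $[2,4)$ into a bound on $\limsup_{x\to\infty}$; as written, this half of ``best possible'' is an outline, not a proof. In the paper it costs one line: by Lemma \ref{lemma2}, $m_k\equiv 2$ forces $D_k\to\frac13$, hence $\limsup=2/(1+\frac13)=\frac32$. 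The gap is reparable along exactly the lines you sketch: $\Phi$ is the distribution function of a non-atomic self-similar measure supported on $[0,\frac13]$, satisfying $\Phi(s)=\frac12\Phi(4s)$ on $[0,\frac14]$ and $\Phi(s)=\frac12+\frac12\Phi(4s-1)$ on $[\frac14,\frac13]$; on $[0,\frac14]$ the bound is trivial since $\Phi(s)\le\frac12\le\frac32 s+\frac12$, while on $[\frac14,\frac13]$ a well-founded induction on the scale (the map $s\mapsto 4s-1$ quadruples the distance to $\frac13$) gives $\Phi(s)\le\frac12+\frac12\bigl(\frac32(4s-1)+\frac12\bigr)=3s\le\frac32 s+\frac12$, with equality only in the limit $s=\frac13$, matching your equality points $\theta=\frac43,\frac83$. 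So the plan is sound, but you should either carry out this short self-similar induction together with the standard convergence preliminaries, or simply cite \cite{Ma} as the paper does, since the exact $\limsup$ formula there makes this case instantaneous.
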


\begin{remark} By Theorem \ref{thm3}, we know that there does not exist infinite additive complements $A,B$ such that $r_{A,B}(n)=1$ for every nonnegative integer $n$ and \eqref{1.1b} holds.
\end{remark}

\begin{theorem}\label{thm4}
There exist infinite additive complements $A$ and $B$ of nonnegative integers such that \begin{equation}\label{1}\lim_{x\to +\infty}\frac{A(x)B(x)}{x}=1\end{equation}
and for any given nonnegative integer $c$, we have $d_{A,B}(n)=c$ for infinitely many positive integers $n$.\end{theorem}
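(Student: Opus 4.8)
The plan is to keep the \emph{sum} side rigid and transparent and to manufacture all of the \emph{difference} behaviour by hand. First I would produce a ``backbone'': infinite additive complements $A_0,B_0$ with $\lim_{x\to\infty}A_0(x)B_0(x)/x=1$ (a Danzer / Kiss--S\'andor type complement), arranged moreover so that its difference set is highly incomplete, namely so that there is an infinite family $H$ of positive integers $n$ with $(n+B_0)\cap A_0=\emptyset$, i.e. $d_{A_0,B_0}(n)=0$. I then keep $B=B_0$ and set $A=A_0\cup G$ for a very sparse planted set $G$. Because $A\supseteq A_0$ and $B=B_0$, the complement property is inherited for free, $A+B\supseteq A_0+B_0=\mathbb{N}_0$; and because I will arrange $G(x)=o(A_0(x))$, I get $A(x)=A_0(x)+o(A_0(x))$, so $A(x)B(x)/x=(1+o(1))A_0(x)B_0(x)/x\to1$, which is \eqref{1}. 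Thus all the work is moved into building the backbone and choosing $G$.

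The reason a hole is useful is that it is a clean slate. If $n\in H$ then $n+b\notin A_0$ for \emph{every} $b\in B_0$, so regardless of $G$ we have $d_{A,B}(n)=\#\{b\in B_0:\ n+b\in G\}$. Hence to force $d_{A,B}(n)=c$ for a prescribed $c\ge1$ it suffices to pick $c$ distinct $b_1,\dots,b_c\in B_0$ and put the $c$ translates $n+b_1,\dots,n+b_c$ into $G$: each lies outside $A_0$ and contributes exactly one representation $n=(n+b_i)-b_i$. For $c=0$ I simply reserve the hole $n$ and plant nothing on it. This reduces the theorem to a priority/diagonalisation bookkeeping: list the requirements $R_{c,i}$ ($c\ge0$, $i\ge1$), where $R_{c,i}$ asks for an $i$-th positive integer of difference-multiplicity $c$, process them in one fixed order (so every value $c$ recurs infinitely often), and satisfy them one at a time.

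The interference between requirements is entirely of ``shadow'' type. A planted element $g=n+b_i$ affects $d_{A,B}(n')$ only for $n'\in g-B_0$, and a reserved or planted target $w$ is spoiled only by a planted element lying in $w+B_0$. I would process requirements so that each newly committed target $n$ (and each of its planted elements) is chosen larger than everything committed so far; then for any previously planted $g_{\mathrm{old}}$ one has $g_{\mathrm{old}}-n<0$, lying outside $B_0\subseteq\mathbb{N}_0$, so no old element can reach the new target and one direction of the interference vanishes automatically. In the other direction I must choose the new hole $n$ so that, for the finitely many already-committed targets $w_j$ and the chosen offsets $b_i$, one has $n+b_i-w_j\notin B_0$; equivalently $n$ must avoid the finite union $\bigcup_{j,i}(w_j-b_i+B_0)$. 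Putting the gadgets on a super-increasing sequence of scales keeps $G(x)=o(A_0(x))$ as promised, so this is the only constraint to meet, and meeting it requires the hole family $H$ to be rich enough not to be swallowed by finitely many translates of $B_0$.

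That richness is exactly the main obstacle, and it lives in the construction of the backbone. By the Remark a ratio-$1$ complement cannot satisfy $r_{A_0,B_0}\equiv1$, so the clean unique-difference picture of Theorem~\ref{thm2} is unavailable and the holes must be produced deliberately, against the very balancing that forces $A_0(x)B_0(x)/x\to1$. I would attempt this with a self-similar block construction at rapidly increasing scales $N_1\ll N_2\ll\cdots$: in each block reserve for $A_0$ an empty corridor (an interval containing no element of $A_0$) placed at the same relative position, and keep $B_0$ concentrated low in the block; a resonant offset $n$ comparable to a block length then carries $B_0$ into the corridors of $A_0$ simultaneously at every scale, producing the holes, and by making the corridors wide one obtains not isolated holes but whole intervals of them, which makes it straightforward to keep $H$ from being swallowed by finitely many translates of $B_0$. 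The delicate point, and the heart of the proof, is to show that these corridors can be reserved without destroying either the covering $A_0+B_0=\mathbb{N}_0$ or the fine balancing that pins the product ratio to $1$; this is where the Danzer / Kiss--S\'andor mechanism must be re-run carrying the extra corridor constraint.
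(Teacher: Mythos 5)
Your planting-plus-diagonalisation bookkeeping is fine as far as it goes, but the proposal is not a proof: it is a correct reduction to a backbone lemma that you explicitly leave unproved, and that lemma is where essentially all of the difficulty of the theorem lives. You need ratio-one infinite additive complements $A_0,B_0$ together with an infinite family $H$ of positive integers $n$ satisfying $(n+B_0)\cap A_0=\emptyset$, with $H$ moreover rich enough that arbitrarily large members of it avoid any prescribed finite union of translates $w_j-b_i+B_0$. This is in genuine tension with the hypothesis $A_0(x)B_0(x)=(1+o(1))x$: that condition forces $(A_0,B_0)$ to be a near-tiling, and Theorems \ref{thm1} and \ref{thm2} of the paper show that in the exact-tiling regime every integer has difference multiplicity exactly one, i.e.\ there are no holes whatsoever. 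So holes must be carved against the very efficiency you must preserve. Note also what a single fixed hole $n$ demands: $b+n\notin A_0$ for \emph{every} $b\in B_0$, i.e.\ a forbidden offset just above each element of $B_0$ at every scale, and your corridor sketch ("re-run the Danzer/Kiss--S\'andor mechanism carrying the extra corridor constraint") asserts rather than verifies that both the covering $A_0+B_0=\mathbb{N}_0$ and the ratio-one balancing survive this carving, and likewise that the corridors can be made long enough that $H$ is not swallowed by finitely many translates of $B_0$ (in block constructions $B_0$ itself contains long intervals, so this is not automatic). As submitted, the heart of the argument is a heuristic.

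It is worth seeing how the paper avoids ever needing such holes: it plants in \emph{both} sets. Taking $A_1=\{a_1<a_2<\cdots\}$ and $B_1=\{b_1<b_2<\cdots\}$ from \cite[Theorem 2]{Sandor} as a black box, it adjoins, for each $n$, matched finite blocks $C_n\subset A$ and $D_n\subset B$ placed beyond $T_n=\max\{a_{n^4},b_{n^4}\}$, with exponentially spaced internal offsets; the prescribed difference multiplicities are then realized by differences between planted elements of $C_n$ and $D_n$ themselves, so both endpoints of every engineered representation are under the constructor's control and the difference structure of the backbone is never an obstruction. The $n^4$ indexing makes the plants so sparse that $A_1(x)\le A(x)\le A_1(x)+\sqrt{A_1(x)}$ and similarly for $B$, which preserves \eqref{1} with no modification of the Kiss--S\'andor construction at all. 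To salvage your approach you must either prove the corridor lemma in full -- a task comparable in weight to reproving \cite[Theorem 2]{Sandor} -- or switch to two-sided planting, where the condition ``$n+b_i$ lands outside $A_0$'' is replaced by choosing both elements of each planted pair yourself.
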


\section{\bf Proof of Main Results}

\noindent\textbf{Proof of Theorem \ref{thm1}.} \emph{Sufficiency.} We only need to consider sets $A,B$ with the form \eqref{05101}, the other case is completely similar. Noting the fact that every nonnegative integer $n$ can be uniquely written in the form $\varepsilon_0+\varepsilon_1m_1+\varepsilon_2m_1m_2+\varepsilon_3m_1m_2m_3+\cdots$, where  $\varepsilon_i=0,1,\cdots,m_{i+1}-1$, we have $r_{A,B}(n)=1$ for every nonnegative integer $n$.\vskip2mm

Before the proof of \emph{Necessity}, we first prove the following preliminary lemma.\vskip2mm

\begin{lemma}\label{lemma1} Suppose that $C,D\subseteq\mathbb{N}$, $1\in C$ and $r_{C,D}(n)=1$ for every nonnegative integer $n$, then there exist an integer $m\ge 2$ and sets $E,F\subset \mathbb{N}$, $0\in E$, $0,1\in F$, $r_{E,F}(n)=1$ for every nonnegative integer $n$ such that $C=\{0,1,\cdots,m-1\}+mE$ and $D=mF$.\end{lemma}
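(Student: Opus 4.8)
The plan is to peel off the least significant ``digit.'' First I record the normalizations forced by $r_{C,D}(n)=1$: from $r_{C,D}(0)=1$ we get $0\in C$ and $0\in D$, and using the hypothesis $1\in C$, if $1$ were in $D$ then $1=1+0=0+1$ would be two representations, so $1\notin D$ and hence $m:=\min(D\setminus\{0\})\ge 2$. For $0\le j\le m-1$ the only admissible $d$ in $j=c+d$ is $d=0$, whence $\{0,1,\dots,m-1\}\subseteq C$; and $m\in D$ by definition of $m$. The whole lemma then reduces to the single structural statement $(\star)$: every element of $D$ is a multiple of $m$, and $C$ is a union of the full blocks $B_k:=\{km,\dots,km+m-1\}$. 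Indeed, granting $(\star)$ and setting $E:=\{k:km\in C\}$ and $F:=\{k:km\in D\}$, one reads off $C=\{0,\dots,m-1\}+mE$ and $D=mF$ with $0\in E$ and $0,1\in F$.

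I would prove $(\star)$ by strong induction on $k$, carrying the hypothesis $P(k)$: for each $i<k$ the block $B_i$ lies entirely inside or entirely outside $C$, and $D\cap B_i\subseteq\{im\}$. The main computation is to expand $r_{C,D}(km+j)=1$ for each $j=0,\dots,m-1$ by splitting a representation $km+j=c+d$ according to whether $d<km$ or $d\in B_k$. The hypothesis $P(k)$ forces every $d<km$ lying in $D$ to equal some $im$ with $i<k$, and then $c=(k-i)m+j$ sits in the already-classified block $B_{k-i}$, so its membership in $C$ is governed by the left endpoint $(k-i)m$; while for $d\in B_k$ the complementary part $c=j-j'\in\{0,\dots,m-1\}$ is automatically in $C$. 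Writing $\mathbf 1_S$ for the indicator of a set $S$, this yields
\[ r_{C,D}(km+j)=\mathbf 1_C(km+j)+\sum_{i=1}^{k-1}\mathbf 1_D(im)\,\mathbf 1_C((k-i)m)+\mathbf 1_D(km)+\sum_{j'=1}^{j}\mathbf 1_D(km+j'). \]
Subtracting the identity at $j=0$ eliminates the two middle terms and leaves $\mathbf 1_C(km+j)-\mathbf 1_C(km)+T_j=0$, where $T_j:=\sum_{j'=1}^{j}\mathbf 1_D(km+j')\ge 0$. If $km\notin C$ this forces $\mathbf 1_C(km+j)=0$ and $T_j=0$ for all $j$; if $km\in C$, the $j=0$ identity additionally gives $km\notin D$, and the general identity reduces to $\mathbf 1_C(km+j)=1-T_j$.

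The one dangerous configuration, and the step I expect to be the crux, is $km\in C$ together with $km+j_0\in D$ for some $1\le j_0\le m-1$, which would make $B_k$ only partially contained in $C$. I would rule it out by exhibiting two distinct representations of $(k+1)m$: on one hand $(k+1)m=(m-j_0)+(km+j_0)$ with $m-j_0\in\{1,\dots,m-1\}\subseteq C$ and $km+j_0\in D$, and on the other hand $(k+1)m=km+m$ with $km\in C$ and $m\in D$; these differ in their $C$-component since $m-j_0<m\le km$, contradicting $r_{C,D}((k+1)m)=1$. Hence no block is partial and $D\cap B_k\subseteq\{km\}$, so the induction closes and $(\star)$ holds.

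Finally I would verify $r_{E,F}(n)=1$ without generating functions. For fixed $n$, every representation $nm=c+d$ has $d\in D$ a multiple of $m$ by $(\star)$, so $c=(n-f)m$ with $f:=d/m\in F$, and $c\in C$ exactly when $n-f\in E$ by block uniformity. Thus the representations of $nm$ are in bijection with the pairs $(e,f)\in E\times F$ satisfying $e+f=n$, giving $r_{E,F}(n)=r_{C,D}(nm)=1$. This completes the reduction and hence the lemma; the only genuinely nontrivial point is excluding the partial-block scenario above.
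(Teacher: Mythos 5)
Your proof is correct, and its skeleton is the same as the paper's: you isolate the same $m$ (your $m=\min(D\setminus\{0\})$ coincides with the paper's least integer outside $C$, and the normalizations $0\in C\cap D$, $\{0,\dots,m-1\}\subseteq C$, $1,\dots,m-1\notin D$, $m\in D$ are identical), you prove the same structural statement $(\star)$ ($D$ consists of multiples of $m$ and $C$ is a union of full blocks $\{km,\dots,km+m-1\}$), and you descend via the same decomposition $C=\{0,\dots,m-1\}+mE$, $D=mF$. Where you genuinely differ is the engine used to prove $(\star)$: the paper takes the least $n$ violating it and runs a trichotomy ($n\in C\setminus D$, $n\in D\setminus C$, $n\notin C\cup D$), manufacturing a second representation in each branch, whereas you do strong induction on blocks and subtract the exact counting identity for $r_{C,D}(km)$ from the one for $r_{C,D}(km+j)$, so that nonnegativity of the leftover terms $\mathbf{1}_C(km+j)-\mathbf{1}_C(km)+T_j=0$ disposes of everything except one configuration; this consolidates the paper's Cases 1 and 3 and Subcases 2.1 and 2.2.1 into bookkeeping. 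The single surviving ``dangerous'' configuration ($km\in C$, $km+j_0\in D$) is the analogue of the paper's Subcase 2.2.2, and your contradiction $(k+1)m=(m-j_0)+(km+j_0)=km+m$ plays the role of the paper's $n+m-1=(m-1)+n=(n-1)+m$ (incidentally, the paper's display there misprints the conclusion as $r_{C,D}(n-m-1)\ge 2$ when $r_{C,D}(n+m-1)\ge 2$ is meant); note also that your identity automatically forces $km\notin D$ when $km\in C$, which at $k=1$ re-derives $m\notin C$ without a separate argument. Finally, you verify $r_{E,F}(n)=1$ by an explicit bijection between representations of $nm$ and of $n$, while the paper gets the same conclusion in one line from $C+D=\{0,\dots,m-1\}+m(E+F)$; your version costs the induction apparatus but buys a more mechanical, less case-ridden verification, and both are complete proofs of the lemma.
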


\begin{proof} By $r_{C,D}(0)=1$ we know that $0\in C\cap D$. It follows from $r_{C,D}(n)=1$ for every nonnegative integer $n$ that $C\bigcap D=\{0\}$. Since $1\in C$, there exists an integer $m\ge 2$ such that $0,1,\cdots,m-1\in C$ and $m\not\in C$. Then, $1,...,m-1\not\in D$. It follows from $r_{C,D}(m)=1$ and $0\in C$ that $m\in D$. Now we will prove that there exist an integer $m\ge 2$ and sets $E,F\subseteq\mathbb{N}$, $0\in E$, $0,1\in F$, $r_{E,F}(n)=1$ for every nonnegative integer $n$ such that $C=\{0,1,\cdots,m-1\}+mE$ and $D=mF$. Assume the contrary. Suppose that $n$ is the least positive integer which destroys the above property. Then $n$ is not divisible by $m$. Noting that $C\bigcap D=\{0\}$, we divide into the following three cases:\vskip1mm

\noindent{\emph{Case 1}}. $n\in C$, $n\notin D$. Then $\lfloor \frac{n}{m}\rfloor m$, $\lfloor \frac{n}{m}\rfloor m+1,\cdots,n-1\not\in C$. It follows from $r_{C,D}(\lfloor \frac{n}{m}\rfloor m)=1$ that there exists an integer $d$ with $0<d<\lfloor\frac{n}{m}\rfloor m$, $m|d$ and $d\in D$ such that $\lfloor\frac{n}{m}\rfloor m-d\in C$. Since
\begin{eqnarray*}m|\lfloor\frac{n}{m}\rfloor m-d\hskip3mm \mbox{and}\hskip3mm \lfloor \frac{n}{m}\rfloor m-d<\lfloor\frac{n}{m}\rfloor m,\end{eqnarray*} so $n-d\in C$. Hence $n=(n-d)+d=n+0$, where $n-d,n\in C$ and $0,d\in D$, then $r_{C,D}(n)\ge 2$, a contradiction.\vskip1mm

\noindent{\emph{Case 2}}. $n\not\in C, n\in D$.\vskip1mm

Subcase 2.1. $\lfloor\frac{n}{m}\rfloor m\in D$. Then $n=0+n=(n-\lfloor \frac{n}{m}\rfloor m)+ \lfloor\frac{n}{m}\rfloor m$, where $0,n-\lfloor\frac{n}{m}\rfloor m \in C$ and $n$, $\lfloor \frac{n}{m}\rfloor m\in D$, so $r_{C,D}(n)\ge 2$, a contradiction.\vskip1mm

Subcase 2.2. $\lfloor\frac{n}{m}\rfloor m\not\in D$. It follows from $r_{C,D}(\lfloor \frac{n}{m}\rfloor m)=1$ that $\lfloor \frac{n}{m}\rfloor m=c+d$, where $d<\lfloor \frac{n}{m}\rfloor m$ and $m|d$.\vskip1mm

Subsubcase 2.2.1. $d>0$. Hence $c<\lfloor\frac{n}{m}\rfloor m$ and $m|c$. So $c+n-\lfloor \frac{n}{m}\rfloor m \in C$. Then $n=0+n=(c+n-\lfloor \frac{n}{m}\rfloor m)+d$, where $0,c+n- \lfloor \frac{n}{m}\rfloor m\in C$ and $n,d\in D$, so $r_{C,D}(n)\ge 2$, a contradiction.\vskip1mm

Subcase 2.2.2. $d=0$. Then $\lfloor\frac{n}{m}\rfloor m\in C$. Hence $\lfloor\frac{n}{m}\rfloor m, \lfloor\frac{n}{m}\rfloor m+1,\cdots,n-1\in C$. It follows that \begin{eqnarray*}n+m-1=(m-1)+n=(n-1)+m,\hskip3mm \mbox{where}\hskip3mm m-1,n-1\in C\hskip2mm \mbox{and}\hskip2mm n,m\in D,\end{eqnarray*} so $r_{C,D}(n-m-1)\ge 2$, a contradiction.\vskip1mm

\noindent{\emph{Case 3}}. $n\not\in C, n\not\in D$. Then $\lfloor\frac{n}{m}\rfloor m$, $\lfloor \frac{n}{m}\rfloor m+1,\cdots,n-1\in C$. If $n=c+d$, then $0<d\le\lfloor \frac{n}{m}\rfloor m$, so $d\ge m$ and $c<n$. Then $\lfloor\frac{c}{m}\rfloor m \in C$. It follows that \begin{eqnarray*}\lfloor \frac{n}{m}\rfloor m =\lfloor\frac{n}{m}\rfloor m+0=\lfloor \frac{c}{m}\rfloor m+d, \hskip3mm \mbox{where}\hskip3mm \lfloor \frac{n}{m}\rfloor m, \lfloor \frac{c}{m}\rfloor m\in C\hskip2mm \mbox{and}\hskip2mm d,0\in D,\end{eqnarray*} so $r_{C,D}( \lfloor \frac{n}{m}\rfloor m )\ge 2$, a contradiction.\vskip2mm

By $C=\{0,1,\cdots,m-1\}+mE$ and $D=mF$, we know that $C+D=\{0,1,\cdots,m-1\}+m(E+F)$. It follows from $r_{C,D}(n)=1$ for every nonnegative integer $n$ that $r_{E,F}(n)=1$ for every nonnegative integer $n$. This completes the proof of Lemma \ref{lemma1}.
\end{proof}

We now return to the proof of Theorem \ref{thm1}.\vskip2mm

\emph{Necessity.} By $r_{A,B}(0)=1$ we know that $0\in A\cap B$. We may assume that $1\in A$.
Now we will take induction on positive integer $k$ to prove that:\vskip2mm

\noindent\textbf{Proposition.} There exist a sequence $\{m_1,m_2,\cdots, m_{2k}\}$ of positive integers no less than two and sets $C_k,D_k\subset \mathbb{N}$, $0,1\in C_k$, $0\in D_k$, $r_{C_k,D_k}(n)=1$ for every nonnegative integer $n$, such that $A=A_k+m_1m_2\dots m_{2k}C_k$ and $B=B_k+m_1m_2\dots m_{2k}D_k$, where $A_k$ and $B_k$ are defined as follows (choose $m_0=1$, the same in the remaining proof):
\begin{eqnarray}\label{05102}
&&A_k=\{\epsilon_0+\epsilon_2m_1m_2+\cdots+\epsilon_{2k-2}m_1\cdots m_{2k-2}, \epsilon_{2i}=0,1,\cdots,m_{2i+1}-1\},\nonumber\\
&&B_k=\{\epsilon_1m_1+\epsilon_3m_1m_2m_3+\cdots+\epsilon_{2k-1}m_1\cdots m_{2k-1}, \epsilon_{2i-1}=0,1,\cdots,m_{2i}-1\}.
\end{eqnarray}

Firstly we apply Lemma \ref{lemma1} to the set $A$ and $B$, then we get that there exist an integer $m_1\ge 2$ and sets $C_1',D_1'\subseteq\mathbb{N}$, $0,1\in D_1'$, $0\in C_1'$, $r_{C_1',D_1'}(n)=1$ for every nonnegative integer $n$ and $B=m_1D_1'$, $A=\{0,1,\dots m_1-1\}+m_1C_1'$. Again, we apply Lemma \ref{lemma1} to the set $C_1'$ and $D_1'$, then we get that there exist an integer $m_2\ge 2$ and sets $C_1,D_1\subseteq\mathbb{N}$, $0,1\in C_1$, $0\in D_1$, $r_{C_1,D_1}(n)=1$ for every nonnegative integer $n$ and $C_1'=m_2C_1$, $D_1'=\{0,1,\dots m_2-1 \}+m_2D_1$. Hence $A=\{ 0,1,\dots m_1-1\}+m_1m_2C_1$, $B=\{0,m_1,\cdots,(m_2-1)m_1\}+m_1m_2D_1$. Namely, $A=A_1+m_1m_2C_1$ and $B=B_1+m_1m_2D_1$, $0,1\in C_1$, $0\in D_1$, $r_{C_1,D_1}(n)=1$ for every nonnegative integer $n$. Proposition holds for $k=1$.\vskip2mm

Assume that Proposition holds for some positive integer $k$, now we consider $k+1$.
We apply Lemma \ref{lemma1} to the set $C_k$ and $D_k$, then we get that there exist an integer $m_{2k+1}\ge 2$ and sets $C_{k+1}',D_{k+1}'\subseteq\mathbb{N}$, $0,1\in D_{k+1}'$, $0\in C_{k+1}'$, $r_{C_{k+1}',D_{k+1}'}(n)=1$ for every nonnegative integer $n$ and $D_k=m_{2k+1}D_{k+1}'$, $C_k=\{0,1,\dots m_{2k+1}-1\}+m_{2k+1}C_{k+1}'$. Again, we apply Lemma \ref{lemma1} to the set $C_{k+1}'$ and $D_{k+1}'$, then we get that there exist an integer $m_{2k+2}\ge 2$ and sets $C_{k+1},D_{k+1}\subseteq\mathbb{N}$, $0,1\in C_{k+1}$, $0\in D_{k+1}$, $r_{C_{k+1},D_{k+1}}(n)=1$ for every nonnegative integer $n$ and $C_{k+1}'=m_{2k+2}C_{k+1}$, $D_{k+1}'=\{0,1,\cdots,m_{2k+2}-1\}+m_{2k+2}D_{k+1}$. It follows from the induction hypothesis that
\begin{eqnarray*}
A&=&A_k+m_1m_2\dots m_{2k}C_k\\
&=&A_k+m_1m_2\dots m_{2k}(\{0,1,\dots m_{2k+1}-1\}+m_{2k+1}m_{2k+2}C_{k+1})\\
&=&A_{k+1}+m_1m_2\dots m_{2k+2}C_{k+1},\\
B&=&B_k+m_1m_2\dots m_{2k}D_k\\
&=&B_k+m_1m_2\dots m_{2k}(\{0,1,\cdots,m_{2k+2}-1\}m_{2k+1}+m_{2k+1}m_{2k+2}D_{k+1})\\
&=&B_{k+1}+m_1m_2\dots m_{2k+2}D_{k+1},\end{eqnarray*}
where $0,1\in C_{k+1}$, $0\in D_{k+1}$, $r_{C_{k+1},D_{k+1}}(n)=1$ for every nonnegative integer $n$. Thus, Proposition holds for $k+1$. As $k\rightarrow\infty$, \eqref{05101} holds. Then \emph{Necessity} follows.\vskip2mm

This completes the proof of Theorem \ref{thm1}. \vskip3mm

\noindent\textbf{Proof of Theorem \ref{thm2}.} If $r_{A,B}(n)=1$ for every nonnegative integer $n$, then by Theorem \ref{thm1} we know that sets $A,B$ are with the form \eqref{05101} (or $A,B$ interchanged). We only need to consider sets $A,B$ with the form \eqref{05101}, the other case is completely similar. We will firstly prove that for every positive integer $k$, we have \begin{eqnarray}\label{05133} A_k-B_k&=&[-(m_2-1)m_1-(m_4-1)m_1m_2m_3-\cdots-(m_{2k}-1)m_1m_2\cdots m_{2k-1},\nonumber\\
&&(m_1-1)+(m_3-1)m_1m_2+\cdots+(m_{2k-1}-1)m_1m_2\cdots m_{2k-2}],\end{eqnarray}
where the sets $A_k,B_k$ are defined in \eqref{05102}. Write the right side in \eqref{05133} as $I_k$. \vskip2mm

Clearly, $A_1=\{0,1,\cdots,m_1-1\}$, $B_1=\{0,m_1,\cdots,(m_2-1)m_1\}$. Hence $A_1-B_1=I_1$.
\eqref{05133} holds for $k=1$. Assume that \eqref{05133} holds for some positive integer $k$, namely, $A_k-B_k=I_k$. It follows that \begin{eqnarray*}A_{k+1}-B_{k+1}=\bigcup_{i=0}^{m_{2k+1}-1}\bigcup_{j=0}^{m_{2k+2}-1}((im_1m_2\cdots m_{2k}-jm_1m_2\dots m_{2k+1})+I_k)=I_{k+1}.\end{eqnarray*}
Thus, \eqref{05133} holds for $k+1$.\vskip2mm

By \eqref{05133} and $|A_k-B_k|\le m_1m_2\dots m_{2k}=|I_k|$, we know that $d_{A_k,B_k}(n)=1$ for every $n\in I_k$. Noting that \begin{eqnarray*}A=\bigcup_{k=1}^{\infty} A_k,\hskip4mm
B=\bigcup_{k=1}^{\infty} B_k,\end{eqnarray*} as $k\rightarrow\infty$, we could deduce from
\eqref{05133} that $d_{A,B}(n)=1$ for every integer $n$. This completes the proof of Theorem \ref{thm2}.\vskip3mm

Before the proof of Theorem \ref{thm3}, we first introduce the following nice lemma from \cite[Lemma 2.1]{Ma}.
\vskip2mm

\begin{lemma}\label{lemma2} (\cite{Ma})
Let $m_1,m_2,\cdots$ be arbitrary integers no less than two. Then the sets $A$ and $B$ with the form \eqref{05101} are infinite additive complements such that
\begin{eqnarray}\label{05104}
\limsup_{x\rightarrow\infty}\frac{A(x)B(x)}{x}=\limsup_{x\rightarrow\infty}\frac{2}{1+D_k},
\end{eqnarray}
where
\begin{eqnarray*}
D_k=\frac{1}{m_k}-\frac{1}{m_km_{k-1}}+\frac{1}{m_km_{k-1}m_{k-2}}-\cdots
+(-1)^{k-1}\frac{1}{m_km_{k-1}\cdots m_1}.\end{eqnarray*}
\end{lemma}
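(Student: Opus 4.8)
The plan is to exploit the unique mixed-radix representation underlying \eqref{05101}, which the sufficiency part of Theorem \ref{thm1} already supplies: every $n\in\mathbb{N}_0$ is uniquely $n=a+b$ with $a\in A$, $b\in B$, so $A,B$ are infinite additive complements, and moreover for every $k$ the sets $A\cap[0,P_k)$ and $B\cap[0,P_k)$ tile $[0,P_k)$, where $P_k=m_1\cdots m_k$ and $P_0=1$. Writing $\alpha_k=|A\cap[0,P_k)|$ and $\beta_k=|B\cap[0,P_k)|$, the tiling gives $\alpha_k\beta_k=P_k$, with $\alpha_{2\ell}=m_1m_3\cdots m_{2\ell-1}$ and $\beta_{2\ell}=m_2m_4\cdots m_{2\ell}$. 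The crucial structural observation is that on each block $[P_k,P_{k+1})$ only the digit $\epsilon_k$ varies: if $k$ is even then $A$ gains elements while $B$ stays frozen at $\beta_k$, and if $k$ is odd the roles are reversed. Since $[P_N,\infty)=\bigsqcup_{k\ge N}[P_k,P_{k+1})$, I would first reduce the statement to the block maxima, proving $\limsup_{x\to\infty}\frac{A(x)B(x)}{x}=\limsup_{k\to\infty}R_k$, where $R_k=\sup_{x\in[P_k,P_{k+1})}\frac{A(x)B(x)}{x}$.

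For the lower bound I would write down explicit test points. For even $k$ set $x_k=P_k+M_k$, where $M_k=\max(A\cap[0,P_k))=\sum_{2i<k}(m_{2i+1}-1)P_{2i}$; the identity $(m_{2i+1}-1)P_{2i}=P_{2i+1}-P_{2i}$ telescopes to $M_k=P_kD_k$. At $x_k$ one has $A(x_k)=2\alpha_k$ and $B(x_k)=\beta_k$, so $\frac{A(x_k)B(x_k)}{x_k}=\frac{2\alpha_k\beta_k}{P_k+M_k}=\frac{2}{1+D_k}$; the analogous odd-$k$ test point gives a value at least $\frac{2}{1+D_k}$. This already yields $\limsup_{x\to\infty}\frac{A(x)B(x)}{x}\ge\limsup_{k\to\infty}\frac{2}{1+D_k}$.

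The substance is the matching upper bound $R_k\le\frac{2}{1+D_k}+O(1/P_k)$, i.e. showing these test points are essentially optimal inside their block. Writing a general $x\in[P_k,P_{k+1})$ (with $k$ even) as $x=jP_k+s$, $0\le s<P_k$, one gets $A(x)=j\alpha_k+A(s)$, $B(x)=\beta_k$, hence $\frac{A(x)B(x)}{x}=1+\frac{\beta_kA(s)-s}{jP_k+s}$; the optimal choice is $j=1$, and the problem collapses to the rank-versus-value inequality $s_g\le\frac{(g-1)(P_k+M_k)}{2\alpha_k}$ for every $g$, where $s_g$ is the $g$-th smallest element of $A\cap[0,P_k)$. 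I expect this to be the main obstacle: the naive approach via convexity of $g\mapsto s_g$ fails, because the large carry jumps (when a high even-position digit increments) destroy monotonicity of the gaps, so the chord bound $s_g\le\frac{g-1}{\alpha_k-1}M_k$ is \emph{false}. The right tools are the self-similar decomposition $A\cap[0,P_k)=\bigsqcup_{t<m_{k-1}}\bigl(tP_{k-2}+(A\cap[0,P_{k-2}))\bigr)$ together with the reflection symmetry $s_g+s_{\alpha_k+1-g}=M_k$, which let one run an induction on $k$ that absorbs each carry jump into the generous $P_k$-term of the bound. Granting this, one obtains $R_k=\frac{2}{1+D_k}$ for even $k$ and $R_k=\frac{2}{1+D_k-1/P_k}$ for odd $k$.

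Finally, since $m_i\ge2$ forces $P_k\to\infty$, both families satisfy $R_k=\frac{2}{1+D_k}+O(1/P_k)$ with $\frac{2}{1+D_k}$ bounded, so $\limsup_{k\to\infty}R_k=\limsup_{k\to\infty}\frac{2}{1+D_k}$. Combined with the lower bound of the second paragraph, this gives the claimed equality.
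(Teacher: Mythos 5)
You should first note a quirk of the comparison: the paper does not prove Lemma \ref{lemma2} at all --- it imports it from \cite{Ma} (Lemma 2.1 there), so your blind attempt is not an alternative to an internal argument but a self-contained substitute for a citation. As such, your outline is essentially correct, and its checkable parts all check: with $P_k=m_1\cdots m_k$, the sets $A\cap[0,P_k)$ and $B\cap[0,P_k)$ have sizes $\alpha_k\beta_k=P_k$; in an even block $[P_k,P_{k+1})$ one has $B(x)=\beta_k$ and $A(jP_k+s)=j\alpha_k+A(s)$, so the ratio equals $1+\frac{\beta_k A(s)-s}{jP_k+s}$ and $j=1$ is indeed optimal because the maximal numerator $P_k-M_k$ is positive; the telescoping $(m_{2i+1}-1)P_{2i}=P_{2i+1}-P_{2i}$ gives $M_k=D_kP_k$ for even $k$ and $M_k'=D_kP_k-1$ for odd $k$ (whence your $R_k=\frac{2}{1+D_k-1/P_k}$ on odd blocks), and since $0\le D_k\le\frac12$ and $P_k\to\infty$, passing from $\limsup_k R_k$ to $\limsup_k\frac{2}{1+D_k}$ is harmless. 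You are also right that the naive chord bound $s_g\le\frac{g-1}{\alpha_k-1}M_k$ is false (take $m_1=2$ and $m_2$ large: $A\cap[0,P_4)$ contains $\{0,1,P_2,P_2+1\}$).

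The one genuinely unproven step is the block-optimality inequality, which you explicitly defer (``granting this''). Two comments. First, as literally stated your inequality points the wrong way: what the bound $R_k\le\frac{2P_k}{P_k+M_k}$ requires is the \emph{lower} bound $s_g\ge\frac{(P_k+M_k)g}{2\alpha_k}-\frac{P_k-M_k}{2}$, and only after applying the digit-complement reflection $s_g+s_{\alpha_k+1-g}=M_k$ (which does hold, via $\epsilon_{2i}\mapsto m_{2i+1}-1-\epsilon_{2i}$) does it become your $s_g\le\frac{(g-1)(P_k+M_k)}{2\alpha_k}$; this equivalence must be said, since the displayed form alone looks backwards. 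Second, the deferred induction does close, and with room to spare, so this gap is fillable exactly with the tools you name: writing $g=t\alpha_{k-2}+h$ with $0\le t<m_{k-1}$, $1\le h\le\alpha_{k-2}$, the self-similar decomposition gives $s_g=tP_{k-2}+s_h$; the $t$-part needs $tP_{k-2}\le\frac{t(P_k+M_k)}{2m_{k-1}}$, which holds since $\frac{P_k+M_k}{2m_{k-1}}\ge\frac{m_kP_{k-2}}{2}\ge P_{k-2}$, and the $h$-part follows from the induction hypothesis because $\frac{P_{k-2}+M_{k-2}}{2\alpha_{k-2}}\le\frac{P_k+M_k}{2\alpha_k}$, which after clearing denominators reduces to $(m_{k-1}m_k-1)P_{k-2}\ge(m_{k-1}-1)M_{k-2}$, clear from $M_{k-2}<P_{k-2}$; the base case $s_g=g-1$ on $\{0,\dots,m_1-1\}$ is immediate. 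You should also state that the identical argument (symmetry and induction for $B\cap[0,P_k)$) is needed in the odd blocks, where the extremal point is $P_k+M_k'$. With those two points made explicit, your proof is complete and, unlike the paper, self-contained.
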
\vskip2mm

\noindent\textbf{Proof of Theorem \ref{thm3}.}
If $r_{A,B}(n)=1$ for every nonnegative integer $n$, then
\begin{eqnarray}\label{05111}\liminf_{x\rightarrow\infty}\frac{A(x)B(x)}{x}\ge1.\end{eqnarray}
We may assume that $1\in A$. It follows from Theorem \ref{thm1} that $A,B$ are with the form \eqref{05101}. For $x_k=m_1m_2\cdots m_{2k}-1$, we have $A(x_k)=m_1m_3\cdots m_{2k-1}, B(x_k)=m_2m_4\cdots m_{2k}$ and hence $A(x_k)B(x_k)-x_k=1$. It follows from \eqref{05111} that
\begin{eqnarray*}\liminf_{x\rightarrow\infty}\frac{A(x)B(x)}{x}=1.\end{eqnarray*}\vskip2mm

If $m_k\ge 3$ for infinitely many $k$, then $D_k<\frac{1}{m_k}\le \frac{1}{3}$ for infinitely many $k$. If $m_k=2$ for all large $k$, then $\lim_{k\rightarrow\infty} D_k=\frac{1}{3}$. In both cases, we have $\liminf_{k\rightarrow\infty}D_k\le \frac{1}{3}$. Obviously, $\liminf_{k\rightarrow\infty}D_k\ge 0$. It follows from Lemma \ref{lemma2} that
\begin{eqnarray*}\frac{3}{2}\le \limsup_{x\rightarrow\infty}\frac{A(x)B(x)}{x}\le 2.
\end{eqnarray*}\vskip2mm

Furthermore, the constants $\frac{3}{2}$ and $2$ are best possible by taking $m_k=2$ for every $k$ and $m_k=k$ for every $k$, respectively.\vskip2mm

This completes the proof of Theorem \ref{thm3}.\vskip3mm

\noindent\textbf{Proof of Theorem \ref{thm4}.} By \cite[Theorem 2]{Sandor}, there exist two infinite sets $A_1=\{a_1<a_2<\cdots\}$ and $B_1=\{b_1<b_2<\cdots\}$ of nonnegative integers such that $r_{A,B}(n)\ge 1$ for all large $n$ and \begin{equation*}\lim_{x\to +\infty}\frac{A_1(x)B_1(x)}{x}=1.\end{equation*}
For every positive integer $n$, let \begin{eqnarray*}T_n=\max\{a_{n^4},b_{n^4}\}\end{eqnarray*}
and \begin{eqnarray*}C_n=\bigcup_{k=0}^{n} \{T_n+(2^{2k-1}+2-2^{k-1})-(2^k-2)k\},\hskip4mm D_n=\bigcup_{k=0}^{n} \{T_n+(2^{2k-1}+2-2^{k-1})-(2^k-1)k\}.\end{eqnarray*}
Take \begin{eqnarray*}A=A_1\bigcup \bigcup_{n=1}^{\infty} C_n\hskip3mm  \mbox{and}\hskip3mm B=B_1\bigcup\bigcup_{n=1}^{\infty} D_n.\end{eqnarray*}
It follows from the construction of $A,B$ that for any given nonnegative integer $c$, we have $d_{A,B}(n)=c$ for infinitely many positive integers $n$. Furthermore,
\begin{eqnarray*}
A_1(x)\le A(x)\le A_1(x)+\sqrt{A_1(x)}\hskip3mm \mbox{and}\hskip3mm B_1(x)\le B(x)\le B_1(x)+\sqrt{B_1(x)}.\end{eqnarray*}
Thus, $A,B$ are infinite additive complements and \eqref{1} holds. This completes the proof of Theorem \ref{thm4}.


\begin{thebibliography}{99}

\bibitem{Danzer} L. Danzer, \"Uber eine Frage von G. Hanani aus der additiven Zahlentheorie, J. Reine Angew. Math. 214/215 (1964), 392-394.

\bibitem{Sandor} S.Z. Kiss, C. S\'andor, On a problem of Chen and Fang related to infinite additive complements, Acta Arith. 200 (2021), 213-220.

\bibitem{Ma} F.Y. Ma, A note on additive complements, arXiv:2205.04128.

\end{thebibliography}
\end{document}